\title{\bf Toroidal Embeddings of Right Groups}
\author{Kolja Knauer, Ulrich Knauer\\
       \small{\tt knauer@\{math.tu-berlin.de, uni-oldenburg.de\}}}
\newtheorem{theo}{Theorem}[section]
\newtheorem{prop}[theo]{Proposition}
\newtheorem{lemm}[theo]{Lemma}
\newtheorem{resu}[theo]{Result}
\newtheorem{rema}[theo]{Remark}
\newtheorem{exam}[theo]{Example}
\theoremstyle{remark}
\theoremstyle{plain}
\begin{document}
\maketitle
\begin{abstract}
In this note we study embeddings of Cayley graphs of right groups on
surfaces. We characterize those right groups which have a toroidal
but no planar Cayley graph, such that the generating system of the
right group has a minimal generating system of the group as a
factor.
\end{abstract}

Keywords: Cayley graph, right group, planar, toroidal, embedding

Mathematics Subject Classification: 05C10, 05C25, 20M30, 57M15

\section{Preliminaries}
A graph is said to be \textit{(2-cell-)embedded} in a surface $M$ if it is
``drawn" in $M$ such that edges intersect only at their common
vertices and deleting the graph from $M$ yields a disjoint union of disks.
A graph is said to be \textit{planar} if it can be
embedded in the plane. By the \textit{genus} of a graph $X$
we mean the minimum genus among all surfaces in which $X$ can be
embedded. So if $X$ is planar then the genus of $X$ is zero. If a
non-planar graph can be embedded on the torus, that is on the
orientable surface of genus 1, it is called \textit{toroidal}. A
graph is said to be \textit{outer planar} if it has an embedding such that one face is incident to every vertex.

 It is known that each group can be defined in terms of generators
and relations, and that corresponding to each such (non-unique)
presentation there is a unique graph, called the Cayley
graph of the presentation. A ``drawing" of this graph gives a
``picture" of the group from which certain properties of the group
can be determined. The same principle can be used for other
algebraic systems. So algebraic systems with a given system of
generators will be called \textit{planar} or \textit{toroidal} if
the respective Cayley graphs can be embedded on the plane or on the
torus.

Finite planar groups have been cataloged by Maschke~\cite{M}. On
the basis of Maschke's Theorem, in this work we investigate
embeddings of certain completely regular semigroups (unions of
groups), namely of right groups. This is a continuation of the
investigations from~\cite{Xia} where Clifford semigroups were in
focus. Here our attention is restricted to a special class of
presentations of right groups for which we classify the toroidal
right groups. Note that this generally only gives upper bounds on
the genus of right groups. The full determination of the genus will
be studied in a subsequent paper~\cite{kk}.

We use $K_n$ for the complete graph on $n$ vertices, $C_n$ for the
cycle on $n$ vertices, and $K_{n,n}$ for the respective complete
bipartite graph. We denote the cyclic group of order $n$ by
$\mathbb{Z}_n=\{0,\dots,n-1\}$, and the dihedral, symmetric and
alternating groups by $D_n$, $S_n$ and $A_n$, respectively.

We recall that a \textit{right group} is a semigroup of the form
$G\times R_r$ where $G$ is a group and $R_r$ is a right zero
semigroup, i. e., $R_r=\{r_1,\dots,r_r\}$ with the multiplication
$r_ir_j=r_j$ for $r_i,r_j\in R_r$.

Every semigroup presentation is associated with a
\textit{Cayley color graph}: the vertices correspond to the elements
of the semigroup; next, imagine the generators of the semigroup to
be associated with distinct colors. If vertices $v_{1}$ and $v_{2}$
correspond to semigroup elements $s_{1}$ and $s_{2}$ respectively,
then there is a directed edge (of the color of the generator $e$)
from $v_{1}$ to $v_{2}$ if and only if $s_{1}e=s_{2}$. It is also
possible to construct a Cayley color  graph by action from the left.
It is clear that for semigroups the structure of this graph may
change heavily, when changing the side of the action.

In this note we consider the graph obtained from the Cayley
color graph by suppressing all edge directions and all edge colors,
deleting loops and multiple edges, that is, the uncolored
Cayley graph. It is clear that in passing from the Cayley color graph
to the corresponding uncolored graph algebraical information is lost but
the genus is not changed. We call this graph
\textit{Cayley graph} and denote it by ${\it Cay}(S,C)$ for the semigroup
$S$ with the set of generators $C\subseteq S$.

The reader is referred to \cite{GT}, \cite{IK}, \cite{Kilp},
 \cite{Pe},  \cite{White} and \cite{Xia} for the terminology and notations which are
not given in this paper.

We need the following results.
\begin{resu}\label{Euler}{\em (Euler, Poincar{\'e} 1758)}
A finite graph with $n$ vertices, $m$ edges, which is $2$-cell
embedded on an orientable surface
 $M$ of genus $g$ with $f$ faces fulfills the Euler-Poincar{\'e} formula: $n-m+f=2-2g$.
\end{resu}

\begin{resu}\label{genuslemm1}{\em (Maschke 1896)} The finite group
$G$ is planar if and only if $G=G_{1} \times G_{2}$, where
$G_{1}=\mathbb{Z}_{1}$ or $\mathbb{Z}_{2}$ and
$G_{2}=\mathbb{Z}_{n}$, $D_{n}$, $S_{4}$, $A_{4}$ or $A_{5}$.
\end{resu}

\begin{rema} \label{generatorrema}\rm{It is clear that planarity depends on the set of
generators $C$ chosen for the Cayley graph. For example ${\it
Cay}(\mathbb{Z}_6,\{1\})=C_6$ and also ${\it
Cay}(\mathbb{Z}_6,\{2,3\})$ which is the box product $ C_3\Box K_2$
is planar, but ${\it Cay}(\mathbb{Z}_6,\{1,2,3\})=K_6$ is not. For
the planar groups $D_{n}$, $S_{4}$, $A_{4}$ or $A_{5}$ we get
various Archemedian solids as Cayley graph representations, with two
or three generators \cite{www}}.
\end{rema}

\begin{resu}\label{genuslemm2}{\em (Kuratowski 1930)}
A finite graph is planar if and only if it does not contain a
subgraph that is a subdivision of $K_{5}$ or $K_{3,3}$.
\end{resu}

\begin{resu}\label{genuslemm3}{\em (Chartrand, Harary 1967)}
A finite graph is outer planar if and only if it does not contain a
subgraph that is a subdivision of $K_4$ or $K_{2,3}$.
\end{resu}

\section{The $Cay$-functor and right groups}
For most of the considerations we can use the following two results
which we take from \cite{UX}. However, as far as we know, there do
not exist general formulas which relate the genus of a cross product
or a lexicographic product of two graphs to the genera of the
factors, compare for example \cite{GT}, \cite{IK} or \cite{White}.
Some of the difficulties with respect to the lexicographic product
can be seen in Example \ref{prop:D3}. We denote by $\times$ the
\textit{cross product} for graphs and also the direct product for
semigroups and sets. By $X[Y]$ we denote the \textit{lexicographic
product} of the graph $X$ with the graph $Y$.

\begin{prop}\label{theo2}
For semigroups $S$ and $T$ with subsets $C$ and $D$, respectively, we
have ${\it Cay}(S\times T, C\times D)={\it Cay}(S,C)\times
{\it Cay}(T,D)$.

\end{prop}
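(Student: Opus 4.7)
The plan is to match the two graphs first on vertices and then on edges, working at the level of colored directed Cayley graphs and only afterwards passing to the underlying uncolored simple undirected graphs. The vertex-set identification is immediate: both ${\it Cay}(S\times T, C\times D)$ and ${\it Cay}(S,C)\times {\it Cay}(T,D)$ are built on $S\times T$.

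For edges, the key input is that multiplication in $S\times T$ is coordinate-wise, so for $(s_1,t_1),(s_2,t_2)\in S\times T$ and $(c,d)\in C\times D$ the equation $(s_1,t_1)(c,d) = (s_2,t_2)$ holds iff $s_1c=s_2$ and $t_1d=t_2$. Reading this in the Cayley color graphs, there is an arc colored $(c,d)$ from $(s_1,t_1)$ to $(s_2,t_2)$ on the left iff there is an arc colored $c$ from $s_1$ to $s_2$ in ${\it Cay}(S,C)$ together with an arc colored $d$ from $t_1$ to $t_2$ in ${\it Cay}(T,D)$. This is precisely the defining condition of the cross product applied to edge-labeled digraphs, so the colored directed Cayley graphs on both sides agree.

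It then remains to suppress colors and directions and to delete loops and multi-edges. Since these operations are local, depending only on the endpoints and labels of an arc, they commute with forming the cross product, yielding the stated equality of uncolored simple Cayley graphs. This last commutation is the step requiring the most care --- for general semigroups the relation $s_1c=s_2$ is not reversible, so a naive undirected cross product of two simple unlabeled graphs can a priori fail to capture the correct edge set --- but it is precisely what is handled in the reference~\cite{UX} from which this proposition is taken.
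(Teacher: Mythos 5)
The paper offers no proof of this proposition at all --- it is imported verbatim from~\cite{UX} --- so your attempt can only be judged against the statement itself. Your first two paragraphs are correct and are the standard argument: multiplication in $S\times T$ is coordinatewise, so the Cayley \emph{color} graph of $(S\times T, C\times D)$ is, arc for arc and color for color, the cross product of the two Cayley color graphs. At the level of colored digraphs the proposition is thereby proved, and this is surely the content of the cited reference.

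The gap is in your last paragraph. The claim that suppressing directions and colors and simplifying ``commutes with forming the cross product'' is not a routine locality statement; it is false in general, and you cannot discharge it by pointing back at the reference whose role is precisely to justify it. Take $S=T=\mathbb{Z}_3$ and $C=D=\{1\}$. Then ${\it Cay}(\mathbb{Z}_3,\{1\})=K_3$ as an uncolored simple graph, and $K_3\times K_3$ has $18$ edges, whereas ${\it Cay}(\mathbb{Z}_3\times\mathbb{Z}_3,\{(1,1)\})$ is a disjoint union of three triangles with only $9$ edges. Concretely, $\{(0,0),(1,2)\}$ is an edge of $K_3\times K_3$ because $\{0,1\}$ and $\{0,2\}$ are edges of $K_3$, but the witnessing arcs are $0\to 1$ and $2\to 0$, which point in opposite directions relative to the pair; since neither $(0,0)+(1,1)=(1,2)$ nor $(1,2)+(1,1)=(0,0)$, this edge is absent from the Cayley graph of the product. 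This is exactly the failure mode you name in your final sentence, and naming it while deferring its resolution to~\cite{UX} is not a proof. The honest conclusion is that the identity holds for Cayley color digraphs (your paragraphs one and two), while for the uncolored simple graphs of the paper one only gets that ${\it Cay}(S\times T,C\times D)$ is a spanning subgraph of ${\it Cay}(S,C)\times{\it Cay}(T,D)$; equality requires either reading $\times$ as the digraph cross product or imposing a symmetry hypothesis on $C$ and $D$ (e.g.\ $C=C^{-1}$ in a group, or $T=R_r$ with $D=R_r$, where every arc is accompanied by its reverse).
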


Note that if in the above formula the semigroup $T$ is $R_r$ its graph
${\it Cay}(R_r,R_r)$ has to be considered as $K_r^{(r)}$, i. e. the
complete graph with $r$ loops.

\begin{prop}\label{theo2a} Let $S$ be a monoid with identity $1_S$, $T$ a semigroup, $C$ and $D$
subsets of $S$ and $T$ respectively. Then $${\it Cay}(S\times T,
(C\times T) \cup (\{1_S\}\times D))={\it Cay}(S,C)[{\it Cay}(T,D)]$$
if and only if $tT=T$ for any $t\in T$, that is if and only if $T$
is a right
 group.
\end{prop}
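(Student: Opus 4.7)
The plan is to compare the edge sets of the two graphs directly on the shared vertex set $S\times T$, and to extract from the comparison the right-ideal condition $tT=T$ for every $t\in T$; the further equivalence with ``$T$ is a right group'' is the standard fact that a (finite) right simple semigroup is a right group.

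On the left-hand side, an undirected edge between distinct vertices $(s_1,t_1)$ and $(s_2,t_2)$ arises from one of two kinds of generators. A generator $(c,t)\in C\times T$ gives such an edge iff there exist $c\in C$ and $t\in T$ with $s_1c=s_2$ and $t_1t=t_2$ (or the symmetric equation obtained by swapping the two endpoints), which may be rewritten as ``$s_1c=s_2$ for some $c\in C$, together with $t_2\in t_1T$'' (or its symmetric variant). A generator $(1_S,d)\in\{1_S\}\times D$ gives an edge iff $s_1=s_2$ and $t_1d=t_2$ or $t_2d=t_1$ for some $d\in D$. On the right-hand side, by the definition of the lexicographic product, adjacency is the disjunction of ``$s_1$ adjacent to $s_2$ in ${\it Cay}(S,C)$'' and ``$s_1=s_2$ and $t_1$ adjacent to $t_2$ in ${\it Cay}(T,D)$''.

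For the ``if'' direction, assume $tT=T$ for every $t\in T$. The side condition $t_2\in t_1T$ (and its symmetric $t_1\in t_2T$) is then vacuous, so the first type of LHS edge collapses to ``$s_1c=s_2$ or $s_2c=s_1$ for some $c\in C$'', that is, exactly an edge of ${\it Cay}(S,C)$. Combined with the second type of LHS edge, this matches the lex-product adjacency on the RHS verbatim, and the two graphs coincide.

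For the ``only if'' direction, suppose for contradiction that $t^*T\neq T$ for some $t^*\in T$, and pick $t_0\in T\setminus t^*T$. The natural witnessing pair is $(1_S,t^*)$ and $(c,t_0)$ for a generator $c\in C$ with $c\neq 1_S$: these are adjacent on the RHS via the edge $\{1_S,c\}$ of ${\it Cay}(S,C)$ coming from $1_Sc=c$, whereas on the LHS such an edge would require either the factorization $(1_S,t^*)(c',t')=(c,t_0)$, forcing $t_0=t^*t'\in t^*T$ and contradicting the choice of $t_0$, or the reverse factorization, which demands that $c$ admit a right inverse in $C$; generators of the form $(1_S,d)$ cannot contribute because $c\neq 1_S$. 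The main obstacle is the case analysis behind the choice of witness: one must ensure that a suitable $c\neq 1_S$ without right inverse in $C$ can always be found, and treat carefully the degenerate configurations in which ${\it Cay}(S,C)$ carries no cross-fibre edges at all, where the equivalence becomes vacuous.
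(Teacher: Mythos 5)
The paper itself offers no proof of this proposition: it is imported from \cite{UX}, where the result is established for the $Cay$ functor at the level of Cayley \emph{colour} graphs, i.e.\ labelled digraphs. Your direct edge-set comparison is the natural route, and your ``if'' direction is essentially correct, modulo one caveat you should at least mention: if some $c\in C$ fixes some $s\in S$ (i.e.\ $sc=s$), the left-hand side acquires a complete fibre over $s$ (since $(s,t_1)(c,t)=(s,t_1t)$ and $t_1T=T$), whereas the lexicographic product built from the \emph{loop-deleted} ${\it Cay}(S,C)$ puts only a copy of ${\it Cay}(T,D)$ there. This cannot occur when $S$ is a group and $1_S\notin C$, which is the only situation the paper uses.

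The genuine gap is in the ``only if'' direction, and it is precisely the case you flag and then leave open. Your witness edge between $(1_S,t^*)$ and $(c,t_0)$ is absent from the left-hand side only if \emph{both} factorizations fail, and the reverse one, $(c,t_0)(c',t')=(1_S,t^*)$, succeeds whenever $c$ has a right inverse in $C$ and $t^*\in t_0T$. This cannot always be avoided by a better choice of witness: take $S=\mathbb{Z}_2$ with $C=\{1\}$ (the generator is its own inverse) and $T=\{1_T,a\}$ the two-element monoid with $a^2=a$, $D=\{a\}$. Then $aT=\{a\}\neq T$, so $T$ is not a right group, yet every pair $u,v\in T$ satisfies $v\in uT$ or $u\in vT$, and a direct check shows that both sides of the claimed identity are the same $K_4$ on the vertex set $S\times T$. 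Hence, for the paper's notion of Cayley graph --- undirected, unlabelled, with loops and multiple edges deleted --- the ``only if'' direction is actually \emph{false} in such degenerate configurations, so the obstacle you identify is not a technicality to be ``treated carefully'' but an unfillable hole at this level of generality. The statement is correct for Cayley colour digraphs, where the arrow $(1_S,t^*)\to(c,t_0)$ of the lexicographic product has no counterpart on the left because $t_0\notin t^*T$ and reverse arrows do not interfere; to prove the undirected version one must either work at the directed level first or impose extra hypotheses (e.g.\ on right inverses in $C$). As written, your proof does not close this case and is therefore incomplete.
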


\begin{rema} \rm{A formal description of the relation between graphs and subgraphs which are subdivisions
with the help of the $Cay$-functor on semigroups with generators
seems to be difficult. In ${\it Cay}(\mathbb{Z}_6,\{1\})$ we find a
subdivision of $K_3$ corresponding to ${\it Cay}(\{0, 2, 4\},
\{2\})$, as a subgraph. But subdivision is not a categorical
concept. And there is no inclusion
  between
$\{0, 2, 4\}\times \{2\}$ and $\mathbb{Z}_6\times \{1\}$. }
\end{rema}

\section{The embeddings}

Now we determine the minimal genus among the Cayley graphs ${\it
Cay}(G\times R_r, C\times R_r)$ taken over all minimum generating
set $C$ of the group $G$. We do not claim that an embedding of this
graph gives the (minimal) genus of the right group considered.
Generally $G\times R_r$ may have a generating system $C'\neq C\times R_r$ which yields a Cayley graph with fewer edges and
consequently tends to have a smaller genus. A straight-forward
calculation yields the following lemma. Note that the first equality
can also be obtained by applying Proposition \ref{theo2a} in the
form ${\it Cay}(G\times R_r, (C\times R_r) \cup (\{1_G\}\times
\emptyset))={\it Cay}(G,C)[{\it Cay}(R_r,\emptyset)]$.

\begin{lemm}
 Denote by ${\it Cay}(G,C)[\overline K_r]$
the lexicographic product of ${\it Cay}(G,C)$ with $r$ isolated
vertices. We have ${\it Cay}(G\times R_r, C\times R_r)={\it Cay}(G,C)[\overline K_r]$.
\end{lemm}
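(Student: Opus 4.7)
The plan is to unpack both sides of the claimed equality from their definitions and check that they have the same vertex set and the same edge set. Both sides clearly share the vertex set $G\times R_r$ (identified with $G\times\{r_1,\dots,r_r\}$), so the work is entirely about adjacency.

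First I would analyse the left-hand side. A pair of distinct vertices $(g_1,r_i),(g_2,r_j)$ is adjacent in $\mathit{Cay}(G\times R_r,\,C\times R_r)$ iff there exists a generator $(c,r_k)\in C\times R_r$ with $(g_1,r_i)(c,r_k)=(g_2,r_j)$, i.e.\ $g_1c=g_2$ and $r_ir_k=r_j$ (or the symmetric condition after forgetting edge directions). The key observation is the right-zero rule $r_ir_k=r_k$: the second coordinate $r_j$ is achievable by simply taking $r_k=r_j$, no matter what $r_i$ is. Hence the adjacency condition reduces to $g_2\in g_1C$ (or $g_1\in g_2C$), with the $R_r$-coordinates completely free subject to the vertices being distinct. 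In other words, $(g_1,r_i)$ and $(g_2,r_j)$ are adjacent in the left-hand graph iff $g_1$ and $g_2$ are adjacent in $\mathit{Cay}(G,C)$.

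Next I would read off the right-hand side directly from the definition of the lexicographic product: $(g_1,r_i)\sim(g_2,r_j)$ in $\mathit{Cay}(G,C)[\overline{K_r}]$ iff either $g_1\sim g_2$ in $\mathit{Cay}(G,C)$, or $g_1=g_2$ and $r_i\sim r_j$ in $\overline{K_r}$. Since $\overline{K_r}$ has no edges, the second clause is vacuous, leaving precisely the same criterion as on the left. The two edge sets coincide and the equality of graphs follows.

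As an alternative (hinted at in the paper between the statement and this lemma), one can invoke Proposition~\ref{theo2a} with $S=G$, $T=R_r$, $D=\emptyset$: one checks that $R_r$ is a right group since $tR_r=R_r$ for every $t\in R_r$, and then $(C\times R_r)\cup(\{1_G\}\times\emptyset)=C\times R_r$ and $\mathit{Cay}(R_r,\emptyset)=\overline{K_r}$, giving the stated equality immediately.

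There is no genuine obstacle; the only point that needs care is not to overlook the right-zero identity $r_ir_k=r_k$, which is precisely what decouples the second coordinate and turns the $R_r$-factor into an edgeless factor in the lexicographic product rather than a nontrivial one.
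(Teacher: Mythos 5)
Your proof is correct and matches the paper exactly: the paper itself only asserts that ``a straight-forward calculation'' gives the result and points to Proposition~2.2 with $D=\emptyset$ as an alternative, and you have simply written out both of these routes in full. The only implicit hypothesis worth keeping in mind is that $1_G\notin C$ (automatic here since $C$ is a minimal generating set of a nontrivial group), as otherwise the left-hand graph would acquire edges inside a fibre $\{g\}\times R_r$ that the lexicographic product with $\overline{K_r}$ does not have.
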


Note that this product can be seen as replacing every vertex of
${\it Cay}(G,C)$ by $r$ independent vertices and every edge by a
$K_{r,r}$. In particular $K_{k,k}[ \overline
K_r]=K_{kr,kr}$.

\begin{prop}
If ${\it Cay}(G,C)$ is not planar then ${\it Cay}(G\times R_r,
C\times R_r)$ cannot be embedded on the torus.
\end{prop}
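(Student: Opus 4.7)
By the preceding lemma it suffices to show that, for $H=\mathit{Cay}(G,C)$ non-planar and $r\ge 2$, the lexicographic blow-up $H[\overline{K_r}]$ has genus at least $2$.

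By Result~\ref{genuslemm2}, $H$ contains a subdivision of $K_5$ or of $K_{3,3}$, and hence contains $K_5$ or $K_{3,3}$ as a minor. The bridge from $H$ to its blow-up will be the following transfer principle, which I would prove as a short lemma: whenever $X$ is a minor of $Y$, the blow-up $X[\overline{K_r}]$ is a minor of $Y[\overline{K_r}]$. Given branch sets $(B_v)_{v\in V(X)}$ witnessing $X$ as a minor of $Y$, the $r$ layer-copies $B_v\times\{i\}$ with $i=1,\dots,r$ serve as branch sets in $Y[\overline{K_r}]$: each such set is connected because its induced subgraph is isomorphic to $B_v$, and whenever $(v,i)(v',j)$ is an edge of $X[\overline{K_r}]$, the $K_{r,r}$ replacing the corresponding edge of $Y$ supplies an edge between the two blown-up branch sets. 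The verification is routine.

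Applying this principle, $H[\overline{K_r}]$ contains as a minor either $K_{3,3}[\overline{K_r}]=K_{3r,3r}$ or $K_5[\overline{K_r}]$, the complete $5$-partite graph with all parts of size $r$. A direct application of Result~\ref{Euler} then shows both have genus at least $2$ whenever $r\ge 2$: bipartiteness of $K_{3r,3r}$ gives $2m\ge 4f$, which combined with Euler's formula forces the genus to be at least $4$ already at $r=2$; for the $5$-partite graph the triangle bound $2m\ge 3f$ already forces the genus to be at least $3$ at $r=2$. Since the genus is monotone under taking minors, $H[\overline{K_r}]$ has genus at least $2$, so it cannot be embedded on the torus.

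The delicate point is the switch from subdivisions to minors: a subdivision of $K_{3,3}$ or $K_5$ blown up by $\overline{K_r}$ is in general neither $K_{3r,3r}$ nor $K_{r,r,r,r,r}$ nor a subdivision of either, so the interaction between subdivisions and the lexicographic product is genuinely subtle (a difficulty flagged already in the remark following Proposition~\ref{theo2a}). Once one works with minors this difficulty disappears and the Euler-Poincar\'e count closes the argument.
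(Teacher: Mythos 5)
Your proposal is correct, and its computational core coincides with the paper's: both arguments reduce the claim to the fact that $K_{3,3}[\overline K_2]\cong K_{6,6}$ and $K_5[\overline K_2]\cong K_{2,2,2,2,2}$ have genus at least $2$, the first via the bipartite face bound $2m\ge 4f$ (giving genus at least $4$), the second via the triangle bound $2m\ge 3f$ (giving genus at least $3$), each combined with the Euler--Poincar\'e formula. Where you genuinely add something is the bridge from Kuratowski's theorem to these two graphs. The paper's proof tacitly proceeds as if a non-planar $\mathit{Cay}(G,C)$ contained $K_{3,3}$ or $K_5$ as an actual subgraph, whereas Result~\ref{genuslemm2} only supplies a subdivision; and, as you correctly observe, blowing up a subdivision does not yield a subdivision of the blow-up (a subdivided edge becomes a chain of $K_{r,r}$'s, through whose $r$ interior vertices one cannot route the $r^2$ internally disjoint paths a subdivided edge-bundle of $K_{r,r}$ would require). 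Your minor-transfer lemma --- the branch sets $B_v\times\{i\}$ witnessing that $X[\overline K_r]$ is a minor of $Y[\overline K_r]$ whenever $X$ is a minor of $Y$ --- together with minor-monotonicity of the genus closes exactly this gap, so your write-up is in fact more complete than the one in the paper. The one caveat you share with the paper is that the statement must be read with $r\ge 2$, which is the relevant case for right groups that are not groups; for $r=1$ the Cayley graph is unchanged and a non-planar Cayley graph can of course be toroidal.
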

\begin{proof}
Already $K_{3,3}[\overline K_{2}]\cong K_{6,6}$ has genus 4.
Moreover, the graph $K_5[\overline K_{2}]$  has 10 vertices and 40
edges. An embedding on the torus would have 30 faces by the formula
of Euler-Poincar{\'e}. Even if all faces were triangles in this
graph, this would require 45 edges. So the graphs are not toroidal.
\end{proof}

\begin{prop}
 If $r\geq 5$ then ${\it Cay}(G\times R_r, C\times R_r)$ cannot be embedded on the torus.
\end{prop}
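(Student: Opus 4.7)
The plan is to reduce to showing that $K_{5,5}$ alone is not toroidal, and then observe that the lexicographic product always contains a copy of $K_{5,5}$ as soon as there is at least one edge in ${\it Cay}(G,C)$ and $r \geq 5$.

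First I would dispose of the trivial case $|G|=1$, for which ${\it Cay}(G,C)[\overline{K_r}]$ is edgeless and not the object we care about (it embeds on the sphere anyway). Otherwise $C$ is nonempty, since $C$ is a minimum generating set of a nontrivial group, so ${\it Cay}(G,C)$ contains at least one edge. By the observation following the preceding lemma, each such edge blows up to a $K_{r,r}$ in ${\it Cay}(G,C)[\overline{K_r}]$. Since $r \geq 5$, this gives $K_{5,5}$ as a subgraph, so it is enough to prove that $K_{5,5}$ is not embeddable on the torus.

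For that I would use the same Euler--Poincar{\'e} counting as in the previous proposition, but strengthened by bipartiteness. Here $n=10$ and $m=25$, so a torus embedding would satisfy $f = 2 - 2 + m - n = 15$. Since $K_{5,5}$ is bipartite, every face of any $2$-cell embedding has length at least $4$, which gives the incidence inequality $2m \geq 4f$, i.e.\ $50 \geq 60$, a contradiction. Therefore $K_{5,5}$ is not toroidal, and neither is any graph that contains it as a subgraph.

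I do not expect a real obstacle here: the argument is a close analogue of the one used for $K_{5}[\overline{K_{2}}]$, with bipartiteness replacing the triangular face bound. The only thing worth being careful about is making the reduction step crisp, namely that a single edge of ${\it Cay}(G,C)$ is enough to produce a $K_{r,r}$, so no finer information about $G$ or $C$ is needed beyond $C \neq \emptyset$.
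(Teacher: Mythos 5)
Your proof is correct and follows essentially the same route as the paper: both reduce to the observation that for $r\geq 5$ the graph contains $K_{5,5}$, which is not toroidal. The only difference is that the paper simply cites the known fact that $K_{5,5}$ has genus $3$, whereas you derive non-toroidality directly from the Euler--Poincar{\'e} formula combined with the bipartite face bound $2m\geq 4f$ -- a valid, self-contained substitute for the citation.
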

\begin{proof}
 The resulting graph contains $K_{5,5}$ which has genus 3, compare~\cite{White}.
\end{proof}

\begin{prop}\label{prop:K22}
 If ${\it Cay}(G,C)$ contains a $K_{2,2}$ subdivision and $r\geq 3$ then ${\it Cay}(G\times R_r, C\times R_r)$ cannot be
 embedded on the torus.
\end{prop}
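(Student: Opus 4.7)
The plan is to exhibit a concrete non-toroidal subgraph of ${\it Cay}(G\times R_r, C\times R_r) = {\it Cay}(G,C)[\overline{K}_r]$ and conclude by the monotonicity of genus under subgraph inclusion. The first observation I would make is that $K_{2,2}$ is the $4$-cycle $C_4$, so any subdivision $H$ of $K_{2,2}$ is a connected $2$-regular graph, hence is itself a cycle $C_n$ with $n\ge 4$. Because the lexicographic product with $\overline K_r$ respects subgraph inclusion---every vertex is replaced by $r$ independents and every edge by a $K_{r,r}$---the graph $C_n[\overline K_r]$ occurs as a subgraph of ${\it Cay}(G,C)[\overline K_r]$. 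It therefore suffices to rule out a toroidal embedding of $C_n[\overline K_r]$ for $n\ge 4$ and $r\ge 3$.

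The main step is an Euler--Poincar\'e count of the same flavour as in the proof of the preceding proposition. The graph $C_n[\overline K_r]$ has $nr$ vertices and $nr^2$ edges, one $K_{r,r}$ for each of the $n$ edges of the cycle. Since $C_n$ is triangle-free for $n\ge 4$, so is the blow-up $C_n[\overline K_r]$: any triangle in the blow-up would project to a closed walk of length three in $C_n$, which must itself be a triangle and is excluded. Hence the girth of $C_n[\overline K_r]$ is at least $4$, and every face of a $2$-cell embedding is bounded by at least four edges.

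Assuming a torus embedding and applying Result~\ref{Euler} gives $F = E - V = nr(r-1)$, while double-counting edge-face incidences forces $2E\ge 4F$, i.e.\ $2nr^2 \ge 4nr(r-1)$. This simplifies to $r\le 2$, contradicting the hypothesis $r\ge 3$, so no such embedding can exist.

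The only substantive obstacle is the preliminary structural remark that a subdivision of $K_{2,2}=C_4$ is simply a longer cycle; without it one would have to track a complicated subdivision with four paths of varying lengths. Once the relevant subgraph is reduced to a single cycle $C_n$, the triangle-free girth bound and the Euler count are routine bookkeeping and directly parallel the argument used in the preceding proposition.
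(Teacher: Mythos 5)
Your proof is correct, and it takes a genuinely different route from the paper. The paper disposes of this proposition in one line: the blown-up graph ``contains $K_{6,6}$'', which has genus $4$ by a citation to White. That is immediate when the $K_{2,2}$ subdivision is an actual $4$-cycle, since $K_{2,2}[\overline K_3]=K_{6,6}$, but for a proper subdivision --- a longer cycle $C_m$, which is exactly what occurs in the application to ${\it Cay}(\mathbb{Z}_n,\{1\})=C_n$ for $n\ge 5$ --- the graph $C_m[\overline K_3]$ does not contain $K_{6,6}$ as a subgraph (each vertex sees only the two neighbouring fibres), and extracting even a subdivision or minor of $K_{6,6}$ from it would need a separate argument, since blow-ups of subdivisions are not subdivisions of blow-ups. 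Your approach sidesteps this entirely: you reduce to the clean observation that the relevant subgraph is $C_m[\overline K_r]$ with $m\ge 4$, verify it is triangle-free (the projection argument is sound, since vertices in a common fibre are non-adjacent and a closed walk of length $3$ in a simple graph is a triangle), and then run the girth-$4$ Euler--Poincar\'e count $2E\ge 4F$ with $F=E-V$ to force $r\le 2$. This is self-contained, needs no external genus computation, and handles all subdivision lengths uniformly; the only (minor, and shared with the paper's other proofs) point of pedantry is that the Euler count presumes a $2$-cell embedding, which is justified because a minimum-genus embedding of a connected graph is $2$-cell and the planar case gives an even stronger inequality.
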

\begin{proof}
 The resulting graph contains $K_{6,6}$ which has genus 4, compare~\cite{White}.
\end{proof}

Hence, for the rest of the paper we will check all planar groups $G$
and $1\leq r\leq 4$ for ${\it Cay}(G\times R_r, C\times R_r)$ having
genus 1.

\begin{lemm}\label{lem:threereg}
 If the vertex degree of a planar ${\it Cay}(G,C)$ is at least $3$ then ${\it Cay}(G\times R_2,C\times R_2)$
cannot be embedded on the torus.
\end{lemm}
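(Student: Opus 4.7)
The plan is to combine an Euler--Poincar\'e count with a case analysis driven by the local triangle structure of $X := {\it Cay}(G,C)$. Writing $n = |G|$ and $k$ for the (constant) vertex degree of $X$, we have by hypothesis $k \ge 3$. The graph $H := {\it Cay}(G\times R_2, C\times R_2) = X[\overline{K}_2]$ has $2n$ vertices and, since every edge of $X$ blows up to a $K_{2,2}$, exactly $2nk$ edges. Assuming a torus embedding of $H$, Result~\ref{Euler} gives $F = E - V = 2n(k-1)$, and the routine bound $3F \le 2E$ simplifies to $k \le 3$. This disposes of every $k \ge 4$, so the only remaining case is $k = 3$.

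For $k = 3$ the inequality $3F \le 2E$ is an equality, so every face of the supposed embedding must be a triangle. A triangle in $H$ uses three vertices drawn from three distinct copies (the two copies over a single vertex of $X$ form an independent pair), hence triangles of $H$ project to triangles of $X$ and each triangle of $X$ lifts to exactly $2^3 = 8$ triangles of $H$. If $X$ is triangle-free then so is $H$; girth at least $4$ then gives $4F \le 2E$, i.e.\ $8n \le 6n$, a contradiction. So $X$ must contain a triangle.

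The final step exploits vertex-transitivity. Since $X$ is a Cayley graph, the quantity $t := |E(X[N(v)])|$ is independent of $v$, and the total triangle count is $T_X = tn/3$. A triangular torus embedding requires $F = 4n \le 8 T_X$, i.e.\ $t \ge 3/2$, so only $t \in \{2,3\}$ need to be ruled out. For $t = 3$ every neighborhood induces $K_3$, which together with $3$-regularity and connectedness forces $X = K_4$; but neither of the groups $\mathbb{Z}_4$ and $\mathbb{Z}_2\times\mathbb{Z}_2$ admits a minimum generating set whose Cayley graph is $K_4$ (both produce $C_4$), so this case is vacuous under the standing assumption of the section. The case $t = 2$ I plan to rule out by a short local argument: writing $N(v) = \{a,b,c\}$ with $ab, bc \in E$ and $ac \notin E$, the third neighbor $d$ of $a$ lies outside $\{v,a,b,c\}$ (else $d = c$ would give $ac \in E$), so forcing $X[N(a)] \cong P_3$ requires $bd \in E$; but $N(b) \supseteq \{v,a,c\}$ then makes $\deg(b) \ge 4$, contradicting $3$-regularity.

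The main obstacle is the borderline regime $k = 3$, where the Euler and triangle-count inequalities are nearly tight; the genuine work is the local elimination of the $P_3$-neighborhood case $t = 2$, after which the remaining subcases collapse to the counts above.
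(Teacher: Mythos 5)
Your proof is correct, but it reaches the conclusion by a genuinely different route in the decisive step. Both arguments open with the same Euler--Poincar\'e count on the torus (degree $\ge 6$ forces $3F\le 2E$ to be tight, so $k=3$ and every face of ${\it Cay}(G\times R_2,C\times R_2)$ is a triangle, and these triangles project to triangles of ${\it Cay}(G,C)$). From there the paper argues \emph{algebraically}: a triangle yields a relation $c_1^{\pm 1}c_2^{\pm 1}c_3^{\pm 1}=1_G$ among generators, inclusion-minimality of $C$ forces $c_1=c_2=c_3$ of order $3$, and since $G$ is not cyclic the degree is then at least $4$, so a second Euler count finishes in one stroke. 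You instead argue \emph{combinatorially}: vertex-transitivity makes the local triangle count $t=|E(X[N(v)])|$ constant, the face count forces $t\ge 2$, the case $t=2$ dies by your local $P_3$-neighborhood argument (which is correct: $bd\in E$ would give $\deg(b)\ge 4$), and $t=3$ forces $X=K_4$, which no minimum generating set of a group of order $4$ produces. Your route is longer and needs the case split, but it has a genuine virtue: it isolates exactly where minimality of $C$ is indispensable, namely the $K_4$ case --- and indeed $K_4[\overline K_2]\cong K_{2,2,2,2}$ \emph{does} triangulate the torus, so the lemma is false for non-minimal $C$ (e.g.\ $G=\mathbb{Z}_2\times\mathbb{Z}_2$ with all three involutions as generators). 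The paper's proof hides this near-counterexample inside the redundancy argument. Note that both proofs, like the lemma statement itself, silently import the standing assumption of the section that $C$ is a minimal generating set; you flag this explicitly, which is appropriate. One small point worth a sentence in a final write-up: your inequality $F\le 8T_X$ uses that each $3$-cycle of the embedded graph bounds at most one face, which holds because the graph is connected with more than three vertices.
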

\begin{proof}
Since ${\it Cay}(G,C)$ is at least $3$-regular 
${\it Cay}(G\times R_2,C\times R_2)$ is at least $6$-regular.

Assume that ${\it Cay}(G\times R_2,C\times R_2)$ is embedded on the
torus, then the formula of Euler-Poincar{\'e} yields that all faces are triangular. This implies that every edge of ${\it
Cay}(G\times R_2,C\times R_2)$ lies in at least two triangles, hence
every edge of ${\it Cay}(G,C)$ lies in at least one triangle.

Let $c_1,c_2,c_3\in C$ the generators corresponding to a triangle $a_1,a_2,a_3$. Then $c_1^{\pm 1}c_2^{\pm 1}c_3^{\pm 1}=1_G$ 
for some signing, where $1_G$ is the identity in $G$. If any two of the $c_i$ are
distinct then one of the two is redundant, hence $C$ was not inclusion minimal. Thus every $c\in C$ must
be of order $3$. Since $G$ is not cyclic we obtain that ${\it Cay}(G,C)$ is at least
$4$-regular. The formula of Euler-Poincar{\'e} yields that the at least $8$-regular ${\it Cay}(G\times R_2,C\times R_2)$
cannot be embedded on the torus.
\end{proof}

\begin{figure}[ht]
  \psfrag{a}[cc][cc]{$0$}
  \psfrag{b}[cc][cc]{$1$}
  \psfrag{c}[cc][cc]{$2$}
  \psfrag{d}[cc][cc]{$3$}
  \psfrag{a'}[cc][cc]{$0'$}
  \psfrag{b'}[cc][cc]{$1'$}
  \psfrag{c'}[cc][cc]{$2'$}
  \psfrag{d'}[cc][cc]{$3'$}
  \psfrag{a''}[cc][cc]{$0''$}
  \psfrag{b''}[cc][cc]{$1''$}
  \psfrag{c''}[cc][cc]{$2''$}
  \psfrag{d''}[cc][cc]{$3''$}
\begin{center}
  \includegraphics[width = \textwidth]{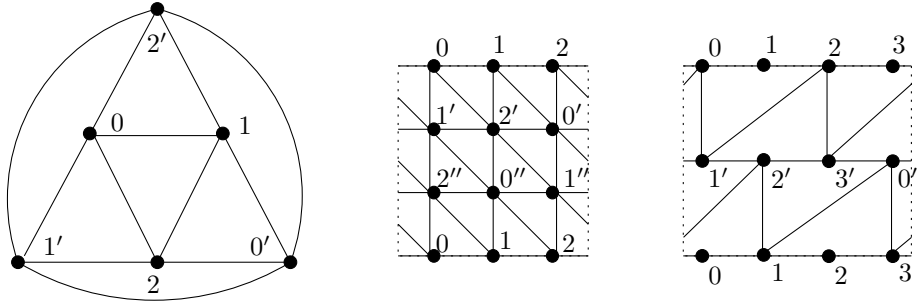}
  \caption{The planar ${\it Cay}(\mathbb{Z}_3\times R_2, \{1\}\times R_2)$, the toroidal ${\it Cay}(\mathbb{Z}_3\times R_3, \{1\}\times R_3)$
and ${{\it Cay}}(\mathbb{Z}_4\times R_2, \{1\}\times R_2)\cong
K_{4,4}$}
  \label{fig:Z34R23}
\end{center}
\end{figure}

\begin{prop}\label{prop:main}
 The minimum genus of ${\it Cay}(\mathbb{Z}_n\times R_r, C\times R_r)$ among all generating systems $C$ is $1$
 iff $(n,r)\in\{(2,3),(2,4),(3,3),(i,2)\}$ for $i\geq 4$.
\end{prop}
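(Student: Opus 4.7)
The plan is to apply the preceding lemma and reduce the statement to computing the genus of $C_n[\overline K_r]$, where $C_n={\it Cay}(\mathbb{Z}_n,\{a\})$ for any $a$ with $\gcd(a,n)=1$ (all minimum generating systems of $\mathbb{Z}_n$ yield this graph, with the conventions $C_1=K_1$ and $C_2=K_2$). The blown-up graph replaces each vertex of $C_n$ by $r$ independent vertices and each edge by a $K_{r,r}$, is $2r$-regular, and for $n\geq 3$ has $nr$ vertices and $nr^2$ edges; for $n\geq 4$ and $r\geq 2$ its girth equals $4$.

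Next we prune the candidate pairs. The propositions preceding Lemma \ref{lem:threereg} discard every $r\geq 5$ because the blow-up already contains $K_{5,5}$. Moreover, for every $n\geq 4$ the cycle $C_n$ contains a $K_{2,2}$-subdivision (join two antipodal vertices by the two arcs of the cycle), so Proposition \ref{prop:K22} discards every pair with $r\geq 3$ and $n\geq 4$. Hence only the pairs with $r\leq 2$ and $n$ arbitrary, or with $r\in\{3,4\}$ and $n\in\{1,2,3\}$, remain.

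We then check the remaining small pairs directly. The non-toroidal cases are routine: $r=1$ or $n=1$ yields a planar graph; $(n,r)=(2,2)$ gives $K_{2,2}$ and $(3,2)$ gives the octahedron $K_{2,2,2}$, both planar; and $(n,r)=(3,4)$ gives $K_{4,4,4}$, for which Euler-Poincar\'e together with $2m\geq 3f$ forces $g\geq 3$. The toroidal cases $(2,3),(2,4),(3,3)$ correspond to the classical genus-$1$ graphs $K_{3,3},K_{4,4},K_{3,3,3}$.

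The main obstacle is the infinite family $(n,r)=(i,2)$ with $i\geq 4$, since Lemma \ref{lem:threereg} does not apply here. Non-planarity follows from the girth-$4$ bound $m\leq 2n_v-4$ for triangle-free planar graphs, which is violated by $4n>4n-4$. For toroidality we exhibit an explicit rotation system
\[
\pi(i_0)=((i-1)_0,(i+1)_1,(i+1)_0,(i-1)_1),\quad \pi(i_1)=((i-1)_1,(i+1)_0,(i+1)_1,(i-1)_0).
\]
Face-tracing produces exactly $2n$ quadrilateral faces of the two types $i_0\,(i+1)_0\,(i+2)_1\,(i+1)_1$ and $i_0\,(i-1)_0\,(i-2)_1\,(i-1)_1$, and Euler yields $n_v-m+f=2n-4n+2n=0$, so the embedding lies on the torus. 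Putting all cases together gives precisely the list in the statement.
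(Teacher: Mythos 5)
Your proof is correct and its skeleton coincides with the paper's: reduce to the blow-up $C_n[\overline K_r]$ via the lexicographic-product lemma, discard $r\geq 5$ and all pairs with $n\geq 4$, $r\geq 3$ using the preceding propositions, and then treat the remaining small pairs together with the infinite family $(i,2)$, $i\geq 4$. Where you genuinely diverge is in two of the verifications, and in both places your argument is cleaner or more complete. For $(n,r)=(3,4)$ the paper deletes the $16$ edges between two colour classes of $K_{4,4,4}$ and face-counts the resulting bipartite graph, whereas you apply Euler--Poincar\'e with $2m\geq 3f$ directly to $K_{4,4,4}$ and even obtain the sharper conclusion $g\geq 3$. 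For the family $C_n[\overline K_2]$, $n\geq 4$, the paper only displays the $n=4$ square-grid embedding in a figure and calls it ``instructive'' for larger $n$; your explicit rotation system, whose face tracing produces exactly $2n$ quadrilaterals covering each edge twice, gives a uniform and checkable proof of toroidality, and the triangle-free edge bound $m\leq 2v-4$ replaces the paper's explicit $K_{3,3}$ subdivision for non-planarity. The initial reduction to the single generator $C=\{1\}$ (rather than arbitrary generating systems) is treated with the same brevity in both texts, so nothing is lost there.
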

\begin{proof}
By Lemma~\ref{lem:threereg} we can assume $C=\{1\}$.

For $n=2$ we have ${\it Cay}(\mathbb{Z}_2\times R_r, C\times
R_r)=K_{r,r}$ which exactly for $r\in\{3,4\}$ has genus 1.

Take $n=3$. If $r=2$ we obtain the planar graph ${\it
Cay}(\mathbb{Z}_3\times R_2, \{1\}\times R_2)$ shown in
Figure~\ref{fig:Z34R23}. If $r=3$ the resulting graph contains
$K_{3,3}$, so it cannot be planar. Figure~\ref{fig:Z34R23} shows an
embedding as a triangular grid on the torus. If $r=4$ we have the
complete tripartite graph $K_{4,4,4}$. Delete the entire set of $16$
edges between two of the partitioning sets. The remaining
(non-planar) graph has $12$ vertices, $32$ edges and, assuming a
toroidal embedding, $20$ faces. A simple count shows that this
cannot be realized without traingular faces. So for $r\geq 4$ the
graph ${\it Cay}(\mathbb{Z}_3\times R_r, C\times R_r)$ is not
toroidal.

Take $n\geq 4$. Now the graph ${\it Cay}(\mathbb{Z}_n,\{1\})$
contains a $C_4=K_{2,2}$ subdivision. If $r\geq 3$ then ${\it
Cay}(\mathbb{Z}_n\times R_r, \{1\}\times R_r)$ is not toroidal by
Proposition~\ref{prop:K22}. If $r=2$ an embedding of ${\it
Cay}(\mathbb{Z}_4\times R_2, \{1\}\times R_2)$ as a square grid in
the torus is shown in Figure~\ref{fig:Z34R23}. This is instructive
for the cases $n\geq 5$. Moreover we see that the vertices
$\{0,0',2\}$ and $\{1,1',3\}$ induce a $K_{3,3}$ subgraph of ${\it
Cay}(\mathbb{Z}_4\times R_2, \{1\}\times R_2)$. Generally for $n\geq
4$ we have that ${\it Cay}(\mathbb{Z}_n\times R_2, \{1\}\times R_2)$
contains a $K_{3,3}$ subdivision, it hence is not planar.
\end{proof}

\begin{theo} \label{maintheo} Let $G\times R_r$ be a finite rightgroup.
The minimal genus of ${\it Cay}(G\times R_r,C\times R_r)$ among all generating sets $C\subseteq G$
of $G$ is $1$ iff $G\times R_r$ is one of the following rightgroups:
\begin{itemize}
 \item $\mathbb{Z}_n\times R_r$ with $(n,r)\in\{(2,3),(2,4),(3,3),(i,2)\}$ for $i\geq4$
\item $\mathbb{Z}_2\times\mathbb{Z}_{2n+1}\times R_2$ for $n\geq 1$
\item $D_n\times R_2$ for all $n\geq 2$
\item $\mathbb{Z}_2\times D_n\times R_2$ for all $n\geq 2$
\end{itemize}
\end{theo}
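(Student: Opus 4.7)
\emph{Proof plan.} The strategy is to intersect Maschke's classification of finite planar groups with the obstructions proved earlier in this section, leaving a short finite case check and a handful of explicit embeddings to verify.

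First I would combine the earlier proposition asserting that ${\it Cay}(G,C)$ itself must be planar with Result~\ref{genuslemm1} to restrict $G$ to a product $G_1\times G_2$ with $G_1\in\{\mathbb{Z}_1,\mathbb{Z}_2\}$ and $G_2\in\{\mathbb{Z}_n,D_n,S_4,A_4,A_5\}$, and the proposition ruling out $r\ge 5$ to reduce to $r\in\{2,3,4\}$. For $r\in\{3,4\}$, Proposition~\ref{prop:K22} forbids any $K_{2,2}$-subdivision in ${\it Cay}(G,C)$. Since Cayley graphs are vertex-transitive, this forces ${\it Cay}(G,C)\in\{K_1,K_2,K_3\}$, so $G\in\{\mathbb{Z}_1,\mathbb{Z}_2,\mathbb{Z}_3\}$; these cases are exactly analysed by Proposition~\ref{prop:main}, producing the pairs $(n,r)\in\{(2,3),(2,4),(3,3)\}$.

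For $r=2$, Lemma~\ref{lem:threereg} forces ${\it Cay}(G,C)$ to have maximum degree at most $2$. Within Maschke's list this only happens when $G=\mathbb{Z}_n$ (a single generator of order $\ge 3$, giving ${\it Cay}(G,C)=C_n$) or $G=D_n$ with $n\ge 2$ (two involutions whose product has order $n$, giving ${\it Cay}(G,C)=C_{2n}$). Proposition~\ref{prop:main} covers the cyclic subcase; for the dihedral subcase one verifies directly that $C_{2n}[\overline K_2]$ admits a toroidal square-grid embedding in the style of the $\mathbb{Z}_4$ picture of Figure~\ref{fig:Z34R23}. The remaining products $\mathbb{Z}_2\times G_2$ with $G_2\in\{\mathbb{Z}_{2n+1},D_n\}$ collapse into the previous cases via the isomorphisms $\mathbb{Z}_2\times\mathbb{Z}_{2n+1}\cong\mathbb{Z}_{4n+2}$ and (for $n$ odd) $\mathbb{Z}_2\times D_n\cong D_{2n}$, which accounts for the two direct-product entries in the theorem.

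The hard part is excluding the leftover candidates, namely $S_4$, $A_4$, $A_5$ and their direct products with $\mathbb{Z}_2$, as well as $\mathbb{Z}_2\times D_n$ for $n$ even. The key observation is that two involutions always generate a dihedral subgroup, so if $G$ is neither cyclic nor dihedral then no minimal generating set of $G$ produces a Cayley graph of degree at most $2$; an abelianization argument (for instance, the abelianization of $\mathbb{Z}_2\times D_n$ is $(\mathbb{Z}_2)^3$ when $n$ is even, so at least three generators are needed) then pushes the degree of ${\it Cay}(G,C)$ up to at least $3$, at which point Lemma~\ref{lem:threereg} completes the elimination.
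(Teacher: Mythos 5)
Your overall strategy is the same as the paper's: use Maschke's theorem together with the proposition that a non-planar ${\it Cay}(G,C)$ already kills toroidality to restrict $G$, use the $K_{5,5}$ and $K_{6,6}$ propositions to restrict $r$, invoke Lemma~\ref{lem:threereg} for $r=2$ whenever the degree reaches $3$, and feed the surviving cases into Proposition~\ref{prop:main} via the isomorphisms ${\it Cay}(D_n,\{g_1,g_2\})\cong C_{2n}$ and $\mathbb{Z}_2\times\mathbb{Z}_{2n+1}\cong\mathbb{Z}_{4n+2}$. Your reorganization by $r$ first, with the observation that for $r\in\{3,4\}$ a connected vertex-transitive graph free of $K_{2,2}$-subdivisions (i.e.\ of cycles of length at least $4$) can only be $K_1$, $K_2$ or $K_3$, is a clean repackaging of the same ingredients and is sound.

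The one substantive divergence is the case $\mathbb{Z}_2\times D_n$ with $n$ even, and there your argument and the stated theorem contradict each other. The paper asserts that $C=\{(1,g_1),(0,g_2)\}$ generates $\mathbb{Z}_2\times D_n$ and yields $C_{4n}$ for all $n\geq 2$; you argue via the abelianization $(\mathbb{Z}_2)^3$ that for even $n$ every generating set has at least three elements, so Lemma~\ref{lem:threereg} (or, for non-planar ${\it Cay}(G,C)$, the earlier proposition) excludes this right group. Your computation is correct: the two involutions $(1,g_1)$ and $(0,g_2)$ generate a dihedral group of order $2\,\mathrm{lcm}(2,n)$, which for even $n$ equals $2n$ and is a proper subgroup of $\mathbb{Z}_2\times D_n$ (order $4n$); equivalently $\mathbb{Z}_2\times D_n\cong D_{2n}$ only for odd $n$, in which case the fourth bullet is already subsumed by $D_{2n}\times R_2$ in the third. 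So be aware that what you wrote does not establish the theorem as printed --- it establishes a corrected version in which $\mathbb{Z}_2\times D_n\times R_2$ appears only for odd $n$, and in fact it refutes the even-$n$ instances of the published statement. You should flag this explicitly rather than present the argument as a proof of the theorem as stated.
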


\begin{proof}

Since $\mathbb{Z}_2\times\mathbb{Z}_{2n+1}\cong \mathbb{Z}_{4n+2}$
Proposition~\ref{prop:main} proves the first two sets of right groups
to have the desired property.

Observe that ${\it Cay}(D_n,C)$, where $C$ consists of two
generators $g_1, g_2$ of order 2, is isomorphic to ${\it
Cay}(\mathbb{Z}_{2n}, \{1\})$. Thus it is planar and by
Proposition~\ref{prop:main} ${\it Cay}(D_n\times
R_2,\{g_1,g_2\}\times R_2)$ can be embedded on the torus. Any other
generating system for $D_n$ yields ${\it Cay}(D_n,C)$ with degree at
least $3$, hence by Lemma~\ref{lem:threereg} it cannot be embedded
on the torus and in particular is non-planar.

The only generating system for $\mathbb{Z}_2\times D_n$ which escapes the
preconditions of Lemma~\ref{lem:threereg} is $C=\{(1,g_1),(0,g_2)\}$ and
indeed ${\it Cay}(\mathbb{Z}_2\times D_n,C)\cong C_{4n}\cong {\it Cay}(\mathbb{Z}_{4n},\{1\})$.
Thus ${\it Cay}(\mathbb{Z}_2\times D_n\times R_2,C\times R_2)$ is toroidal by
Proposition~\ref{prop:main}.

Let $G\in\{A_4,S_4,A_5, \mathbb{Z}_2\times A_4, \mathbb{Z}_2\times
S_4, \mathbb{Z}_2\times A_5, \mathbb{Z}_2\times \mathbb{Z}_{2n}\}$
for $n\geq 2$. It can be checked that $G$ cannot be generated by two
elements of order two. Since $G$ is not cyclic we have $|\{g\in
C\mid \textmd{ord}(g)=2\}|+2|\{g\in C\mid \textmd{ord}(g)\geq 3\}|\geq 3$ for every
generating system $G$. Thus, by Lemma~\ref{lem:threereg} we know
that ${\it Cay}(G\times R_2,C\times R_2)$ cannot be embedded
on the torus.
\end{proof}

In the above proofs we make strong use of Lemma~\ref{lem:threereg},
which tells us that $3$-regular planar Cayley graphs will not be
embeddable on the torus after taking the cartesian product with
$R_2$. In fact, this operation can increase the genus from $0$ to
$3$ already in the following small example.

\begin{figure}[ht]
  \psfrag{0}[cc][cc]{$0$}
  \psfrag{1}[cc][cc]{$1$}
  \psfrag{2}[cc][cc]{$2$}
  \psfrag{3}[cc][cc]{$3$}
  \psfrag{4}[cc][cc]{$4$}
  \psfrag{5}[cc][cc]{$5$}
  \psfrag{0'}[cc][cc]{$0'$}
  \psfrag{1'}[cc][cc]{$1'$}
  \psfrag{2'}[cc][cc]{$2'$}
  \psfrag{3'}[cc][cc]{$3'$}
  \psfrag{4'}[cc][cc]{$4'$}
  \psfrag{5'}[cc][cc]{$5'$}
  \psfrag{A}[cc][cc]{$X$}
  \psfrag{B}[cc][cc]{$Y$}
  \psfrag{C}[cc][cc]{$Z$}
\begin{center}
  \includegraphics[width = .6\textwidth]{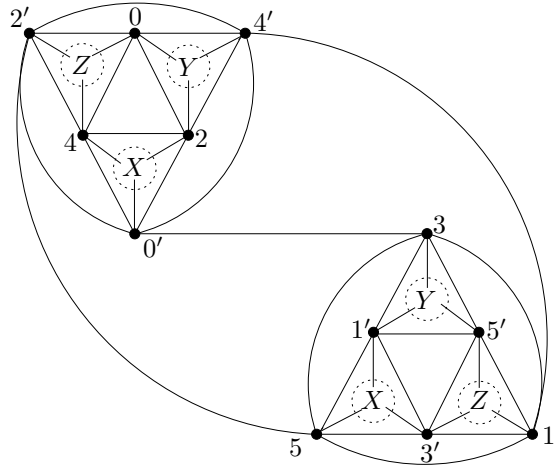}
  \caption{${\it Cay}(\mathbb{Z}_6\times R_2, \{2,3\}\times R_2)$ in the triple torus with handles $X$, $Y$, $Z$.}
  \label{fig:D3R2}
\end{center}
\end{figure}

\begin{exam}\label{prop:D3} The genus of ${\it Cay}(\mathbb{Z}_6\times
R_2, \{2,3\}\times R_2)$ is $3$.
Note that ${\it Cay}(\mathbb{Z}_6\times
R_2, \{2,3\}\times R_2)\cong(C_3\Box K_2)[\overline K_2]$.
\end{exam}
\begin{proof}
To see this we observe that ${\it Cay}(\mathbb{Z}_6\times R_2,
\{2,3\}\times R_2)$ consist of two disjoint copies $C_3\Box K_2$ and
$(C_3\Box K_2)'$ of ${\it Cay}(\mathbb{Z}_6,\{2,3\})$ with vertex
sets $\{0,1,2,3,4,5\}$ and $\{0',1',2',3',4',5'\}$, respectively.
Every vertex $v$ of $C_3\Box K_2$ is adjacent to every neighbor of
its copy $v'$ in $(C_3\Box K_2)'$. Figure~\ref{fig:D3R2} shows an
embedding of ${\it Cay}(\mathbb{Z}_6\times R_2, \{2,3\}\times R_2)$
into the orientable surface of genus $3$ -- \textit{the triple
torus}. This graph is $6$-regular with $12$ vertices, so it has $36$
edges.

By Lemma~\ref{lem:threereg} ${\it Cay}(\mathbb{Z}_6\times R_2,
\{2,3\}\times R_2)$ cannot be embedded on the torus.

So assume that ${\it Cay}(\mathbb{Z}_6\times R_2, \{2,3\}\times
R_2)$ is 2-cell-embedded on the double torus. Delete the $4$ edges
between $1,1'$ and $5,5'$ and the $4$ edges between $0,0'$ and
$4,4'$. The resulting graph $H$ has $28$ edges. It consists of two
graphs
 $A$ and $B$, which are copies of $K_{4,4}$, where $A$
has the bipartition $(\{0,0',5,5'\}$, $\{2,2',3,3'\})$ and $B$ has
$(\{0,0',1,1'\}$, $\{3,3',4,4'\})$. They are glued at the four
vertices with the same numbers and the corresponding $4$ edges are
identified. Although $H$ is no longer bipartite it still is
triangle-free. Hence by our assumption it is  2-cell-embedded on the
double torus. By the formula of Euler-Poincar{\'e} this gives 14
faces and consequently all of them are quadrangular. So the edges
between $1,1'$ and $5,5'$ and between $0,0'$ and $4,4'$, which we
have to put back in, have to be diagonals of these quadrangular
faces. But then $\{2',4,2,0\}$ and $\{2',4,2,0'\}$ are the only
4-cycles in $H$ which contain the vertices $4,0$ and $4,0'$,
respectively, they form faces of $H$. Since they have the common
edges $\{2',4\}$ and $\{2,4\}$ we obtain a $K_{2,3}$ with
bipartition $(\{2,2'\},\{0,0',4\})$. It is folklore that $K_{2,3}$
is not outer planar. Thus the region consisting of the glued
$4$-cycles $\{2',4,2,0\}$ and $\{2',4,2,0'\}$ must contain one of
the vertices $0,0'$ or $4$ in its interior. Hence this vertex has
only degree $2$ -- a contradiction. 
\end{proof}

\noindent We thank Xia Zhang for many helpful comments as well as
Srichan Arworn, Nirutt Pipattanajinda and several graduate students
with whom one of the authors discussed the topic extensively on a
research stay at Chiangmai University, Thailand -- supported by Deutsche
Forschungsgemeinschaft.

\end{document}